\newtheorem{theorem}{Theorem}
\newtheorem{lemma}{Lemma}
\newtheorem{claim}{Claim}
\begin{document}
\title{\Large\bf Asymptotic value of the minimal size of a graph with
rainbow connection number $2$\footnote{Supported by NSFC
No.11071130.}}
\author{\small Hengzhe Li, Xueliang Li, Yuefang Sun
\\
\small Center for Combinatorics and LPMC-TJKLC
\\
\small Nankai University, Tianjin 300071, China
\\
\small lhz2010@mail.nankai.edu.cn; lxl@nankai.edu.cn;
syf@cfc.nankai.edu.cn}
\date{}
\maketitle
\begin{abstract}
A path in an edge (vertex)-colored graph $G$, where adjacent edges
(vertices) may have the same color, is called a rainbow path if no
pair of edges (internal vertices) of the path are colored the same.
The rainbow (vertex) connection number $rc(G)$ ($rvc(G)$) of $G$ is
the minimum integer $i$ for which there exists an $i$-edge
(vertex)-coloring of $G$ such that every two distinct vertices of
$G$ are connected by a rainbow path. Denote by $\mathcal{G}_d(n)$
($\mathcal{G'}_d(n)$) the set of all graphs of order $n$ with
rainbow (vertex) connection number $d$, and define
$e_d(n)=\min\{e(G)\,|\, G\in \mathcal{G}_d(n)\}$
($e_d'(n)=\min\{e(G)\,|\, G\in \mathcal{G'}_d(n)\}$), where $e(G)$
denotes the number of edges in $G$. In this paper, we investigate
the bounds of $e_2(n)$ and get the exact asymptotic value. i.e.,
$\displaystyle \lim_{n\rightarrow \infty}\frac{e_2(n)}{n\log_2
n}=1$. Meanwhile, we obtain $e'_d(n)=n-1$ for $d\geq 2$, and the
equality holds if and only if $G$ is such a graph that deleting all
leaves of $G$ results in a tree of order $d$.

{\flushleft\bf Keywords}: edge(vertex)-coloring, rainbow path,
rainbow (vertex) connection number, diameter, minimal graph\\[2mm]
{\bf AMS subject classification 2010:} 05C15, 05C35, 05C40
\end{abstract}

\section{Introduction}

A communication network consists of nodes and links which connect
nodes. In order to prevent hackers, one can set a password in each
link (node). To facilitate the management, one can require that the
number of passwords is small enough such that any two nodes can
exchange information by a sequence of links (nodes) which have
different passwords. This problem can be modeled by a graph and
studied by means of rainbow (vertex) connection.

All graphs in this paper are undirected, finite and simple. We refer
to book \cite{bondy} for graph theoretical notation and terminology
not described here. A path in an edge (vertex)-colored graph $G$,
where adjacent edges (vertices) may have the same color, is called a
$rainbow\ path$ if no pair of edges (internal vertices) are colored
the same. An edge (vertex)-coloring of $G$ with $k$ colors is called
$k$-$rainbow$ if every two distinct vertices of $G$ are connected by
a rainbow path. The $rainbow\ (vertex)\ connection\ number\ rc(G)\
(rvc(G))$ of $G$ is the minimum integer $i$ for which there exists
an $i$-rainbow edge (vertex)-coloring of $G$ such that any two
distinct vertices of $G$ are connected by a rainbow path. It is easy
to see that $rc(G)\geq diam(G)$ and $rvc(G)\geq diam(G)-1$ for any
connected graph $G$, where $diam(G)$ is the diameter of $G$.

The rainbow connection number was introduced by Chartrand et al. in
\cite{char} which equivalents to the case that we set a password in
each link. They considered the rainbow connection numbers of several
graph classes (complete graphs, trees, cycles, wheels and complete
bipartite graphs) and showed the following result.

\begin{theorem}{\upshape\cite{char}}
$(i)$ $rc(G)=1$ if and only if $G$ is a complete graph.

$(ii)$ For integers $s$ and $t$ with $2\leq s\leq t,$
$$rc(K_{s,t})=\min\{\lceil\sqrt[s]{t}\rceil, 4\},$$
where $K_{s,t}$ is the complete bipartite graph with bipartition $X$
and $Y$, such that $|X|=s$ and $|Y|=t$.
\end{theorem}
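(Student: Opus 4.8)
The plan is to dispose of part $(i)$ in a line and then establish part $(ii)$ by a matching lower and upper bound, using a uniform encoding of edge-colorings of $K_{s,t}$. For $(i)$: if $G$ is complete then the edge joining any two vertices is a rainbow path, so $rc(G)=1$; conversely, under a $1$-coloring a rainbow path has at most one edge (two edges would repeat the only color), so $rc(G)=1$ forces every two vertices to be adjacent, i.e. $G$ is complete. For $(ii)$, write $X=\{x_1,\dots,x_s\}$, $Y=\{y_1,\dots,y_t\}$ and $k=\lceil\sqrt[s]{t}\,\rceil$, so $k^s\geq t$ and $k\geq 2$ (since $t\geq s\geq 2$). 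I would record an arbitrary edge-coloring with colors $\{1,\dots,m\}$ by the map $y_j\mapsto v_j=(c(x_1y_j),\dots,c(x_sy_j))\in\{1,\dots,m\}^s$, and use throughout that the two edges of a path $y_jx_iy_{j'}$ carry the colors $v_j[i]$ and $v_{j'}[i]$.

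For the lower bound $rc(K_{s,t})\geq\min\{k,4\}$, the crucial point is that any $m$-rainbow coloring with $m\leq 3$ has all of the $v_j$ distinct, so $t\leq m^s$. Indeed $K_{s,t}$ is bipartite, so every $y_j$--$y_{j'}$ path has even length; a rainbow path with $4$ or more edges would need $4$ or more colors, impossible when $m\leq 3$, so the rainbow path joining $y_j$ and $y_{j'}$ has length exactly $2$, which forces $v_j\neq v_{j'}$. Hence if $rc(K_{s,t})\leq 3$ then $t\leq rc(K_{s,t})^{s}$, i.e. $rc(K_{s,t})\geq\lceil\sqrt[s]{t}\,\rceil=k$; and if $rc(K_{s,t})>3$ then $rc(K_{s,t})\geq 4$. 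In either case $rc(K_{s,t})\geq\min\{k,4\}$.

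For the upper bound I would produce two colorings. The first gives $rc(K_{s,t})\leq k$: assign the $y_j$ pairwise distinct vectors from $\{1,\dots,k\}^s$ (possible since $k^s\geq t$), arranging that for every $i$ the vector $e_i$ with $i$-th entry $2$ and all other entries $1$ is used (legitimate because $k\geq 2$ and $t\geq s$), and color $x_iy_j$ by $v_j[i]$. Then every $x_i$--$y_j$ is an edge; every $x_i$--$x_{i'}$ is joined by the length-$2$ rainbow path through the $y$ carrying $e_i$ (its $i$-th and $i'$-th entries being $2$ and $1$); and every $y_j$--$y_{j'}$ by a length-$2$ rainbow path through any coordinate where $v_j$ and $v_{j'}$ differ. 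The second gives $rc(K_{s,t})\leq 4$: if $t\leq 4^s$ this is already contained in the first coloring (then $k\leq 4$), so assume $t>4^s$; let the first $4^s$ vertices of $Y$ carry all of $\{1,2,3,4\}^s$ (one vector each) and the remaining vertices carry arbitrary \emph{non-constant} vectors (there are $4^s-4>0$ of these because $s\geq 2$), so that no constant vector is repeated, and again color $x_iy_j$ by $v_j[i]$. The pairs $x_i$--$x_{i'}$, $x_i$--$y_j$, and $y_j$--$y_{j'}$ with $v_j\neq v_{j'}$ are handled by rainbow paths of length at most $2$ as before; if instead $v_j=v_{j'}=v$, then $v$ is non-constant, so choosing $i_0\neq i_1$ with $v[i_0]=a\neq b=v[i_1]$, writing $\{c,d\}=\{1,2,3,4\}\setminus\{a,b\}$, and taking a vertex $y_{j''}$ whose $i_0$-entry is $c$ and $i_1$-entry is $d$, the path $y_j\,x_{i_0}\,y_{j''}\,x_{i_1}\,y_{j'}$ has edge-colors $a,c,d,b$, all distinct, hence is rainbow. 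Combining the two colorings, $rc(K_{s,t})\leq\min\{k,4\}$, which together with the lower bound proves $(ii)$.

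The main obstacle, and the two places where I expect to have to be careful, are: (a) the bipartiteness argument showing that with fewer than $4$ colors every $y$--$y'$ rainbow path has length exactly $2$, since it is precisely this that yields $t\leq m^s$ and powers the lower bound; and (b) recognizing in the $4$-coloring that the only real obstruction is a pair of $y$'s carrying the \emph{same constant} vector (between which no rainbow path of any length can exist), so that using the whole cube $\{1,2,3,4\}^s$ once and never repeating a constant vector is enough, every other pair being joined by a rainbow path of length at most $4$.
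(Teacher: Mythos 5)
Your argument is correct, and it is worth noting that the paper itself gives no proof of this statement: Theorem 1 is simply quoted from Chartrand, Johns, McKeon and Zhang \cite{char}, so there is no in-paper proof to compare against. What you wrote is essentially the standard argument for $rc(K_{s,t})$: part $(i)$ is immediate, the lower bound comes from encoding each $y\in Y$ by the vector of colors on its $s$ edges and observing that with at most $3$ colors every rainbow $y$--$y'$ path must have length exactly $2$ (bipartiteness plus the length-versus-number-of-colors bound), forcing the $t$ vectors to be distinct and hence $t\leq m^s$; the upper bound comes from realizing suitable vectors in $\{1,\dots,k\}^s$, respectively in $\{1,2,3,4\}^s$ with no constant vector repeated, where the only delicate case, two $Y$-vertices with the same non-constant vector, is settled by your explicit rainbow path of length $4$ with colors $a,c,d,b$. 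All the small points are handled correctly: $k\geq 2$, the inclusion of the vectors $e_i$ to connect $x_i$ to $x_{i'}$, the distinctness of the intermediate vertex $y_{j''}$, and the impossibility of connecting two $Y$-vertices carrying the same constant vector. It is also worth observing that your lower-bound mechanism (distinct color vectors indexed by the neighbors' colors) is exactly the idea the authors themselves reuse, in a refined counting form, in the proof of their Lemma 2 via the vectors $\alpha(u_i)$ and Claim 1, so your write-up fits naturally with the techniques of the paper.
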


Krivelevich and Yuster in \cite{kri}, and Schiermeyer in \cite{sch}
investigated the relation between the rainbow connection number and
the minimum degree of a graph. Chandran et al. \cite{kri} studied
the rainbow connection number of a graph by means of connected
dominating sets. Basavaraju et al. in \cite{bas} evaluated the
rainbow connection number of a graph by its radius and chordality
(size of a largest induced cycle). In \cite{chak}, Chakraborty et
al. investigated the hardness and algorithms for the rainbow
connection number, and got the following result.

\begin{theorem}
Given a graph $G$, deciding if $rc(G)=2$ is NP-Complete. In
particular, computing $rc(G)$ is NP-Hard.
\end{theorem}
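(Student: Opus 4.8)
\textit{Proof proposal.} The plan is to show that deciding whether $rc(G)=2$ lies in NP and is NP-hard; the assertion that computing $rc(G)$ is NP-hard then follows at once, since an algorithm returning the value $rc(G)$ would in particular decide whether this value equals $2$.

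For membership in NP, I would first record the elementary fact that a rainbow path in a $2$-edge-colored graph has at most two edges, since a third edge would repeat one of the two colors. Hence $rc(G)=2$ holds exactly when $G$ is not complete and there is a $2$-edge-coloring $c$ such that every pair of distinct vertices $u,v$ satisfies either $uv\in E(G)$, or $u$ and $v$ have a common neighbor $w$ with $c(uw)\neq c(wv)$. (Equivalently, one first reduces the question ``$rc(G)=2$?'' to ``$rc(G)\le 2$?'' by a separate, trivial test of completeness.) Both conditions are checkable in polynomial time once $c$ is given, so $c$ is a polynomial-size, polynomially verifiable certificate and the problem is in NP.

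For NP-hardness I would reduce from $3$-SAT. Given a $3$-CNF formula $\phi$ with variables $x_1,\dots,x_n$ and clauses $C_1,\dots,C_m$, I would build in polynomial time a non-complete graph $G_\phi$, with two color names $1,2$ playing symmetric roles, such that $G_\phi$ admits a $2$-rainbow-coloring if and only if $\phi$ is satisfiable; since $G_\phi$ is non-complete, this is equivalent to $rc(G_\phi)=2$. The graph would be assembled from three kinds of ingredients: a \emph{variable gadget} for each $x_i$, containing a distinguished edge $e_i$ whose color will encode the truth value of $x_i$ and arranged so that the internal rainbow constraints force a free binary choice of $c(e_i)$ and impose no relation between $e_i$ and $e_j$ for $i\neq j$; a \emph{clause gadget} for each $C_j$, linking the three relevant variable gadgets, whose internal pairs admit the required rainbow paths of length at most $2$ precisely when at least one literal of $C_j$ is true under the encoded assignment; and a \emph{padding part} (for instance making all auxiliary vertices pairwise adjacent, or giving every still-uncovered pair two private common neighbors) ensuring that every pair of vertices not already governed by a variable or a clause gadget is rainbow-connected by a path of length at most $2$ under \emph{any} coloring. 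I would then verify both directions: from a satisfying assignment, color the $e_i$'s by the truth values, extend over each clause gadget using a true literal, and color the padding arbitrarily, obtaining a $2$-rainbow-coloring and hence $rc(G_\phi)=2$; conversely, any $2$-rainbow-coloring forces a well-defined assignment through the variable gadgets, and this assignment satisfies every clause because each clause gadget is itself rainbow-connected. Together with membership in NP this yields NP-completeness, and therefore NP-hardness of computing $rc(G)$.

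The hard part will be the design of the three families of gadgets. Since rainbow connection is a global condition over all pairs, inserting a gadget or a padding edge can silently create an extra forced constraint somewhere else --- in the worst case an unsatisfiable one, such as demanding that three edges meeting at a common vertex be pairwise differently colored --- or, conversely, open an unintended length-$2$ rainbow shortcut that allows a clause gadget to be satisfied without any of its literals being true. The crux is to choose the gadgets so that the reduction is faithful in \emph{both} directions simultaneously while keeping $|V(G_\phi)|$ and $|E(G_\phi)|$ polynomial in $n+m$.
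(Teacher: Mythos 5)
The first thing to note is that the paper does not prove this statement at all: it is quoted from Chakraborty, Fischer, Matsliah and Yuster \cite{chak}, so there is no in-paper proof to compare yours against. Judged on its own, your proposal establishes only the easy half. The membership-in-NP argument is fine: with two colors a rainbow path has at most two edges, so a proposed $2$-edge-coloring is a certificate that can be checked by testing, for every non-adjacent pair, the existence of a common neighbor whose two incident edges on the path receive distinct colors, and $rc(G)=2$ is equivalent to ``$G$ not complete and $rc(G)\le 2$''. But the NP-hardness half, which is the entire content of the theorem, is not proved: the variable gadgets, clause gadgets and padding are named but never constructed, and neither direction of the claimed equivalence is (or can be) verified. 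As you yourself observe in your last paragraph, the difficulty is that rainbow connection is a condition on \emph{all} pairs of vertices, so local gadgets interact: padding can create forced constraints elsewhere, or open unintended length-two rainbow shortcuts that let a clause gadget be ``satisfied'' with no true literal. Naming this obstacle does not overcome it, and the proposal gives no evidence that gadgets with the required properties exist.

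For what it is worth, the actual proof in \cite{chak} does not attempt such a one-shot gadget construction. It factors the argument through intermediate problems --- roughly, a version in which only a prescribed set of vertex pairs must be rainbow connected (and a version in which part of the coloring is prescribed) --- proves that version NP-hard, and then gives a separate reduction showing how to enforce the remaining pairs without disturbing the encoding. That staged structure is precisely the device that resolves the global-interaction problem you flag; without it, or some equally concrete substitute, your sketch remains a plan rather than a proof.
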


It is well-known that almost all graphs have diameter $2$. In
\cite{lili}, Li et al. showed that $rc(G)\leq 5$ if $G$ is a
bridgeless graph of diameter $2$, and that $rc(G)\leq k+2$ if $G$ is
a connected graph of diameter $2$ with $k$ bridges, where $k\geq 1$.
For a detailed discussion regarding the origins of the problem,
practical applications and a survey of results, see \cite{lisun}.

Let $d$ and $n$ be natural numbers, $d<n$. Denote by
$\mathcal{G}_d(n)$ the set of all graphs of order $n$ with rainbow
connection number $d$. Define
$$e_d(n)=\min\{e(G)\,|\, G\in \mathcal{G}_d(n)\},$$
where $e(G)$ denotes the number of edges in $G$.

Because a network which satisfies our requirements and has as less
links as possible can cut costs, reduce the construction period and
simplify later maintenance, the study of this parameter is very
interesting and significant. In this paper, we investigate the lower
and upper bounds of $e_2(n)$ and get the exact asymptotic value for
the minimal size of a graph with rainbow connection number 2. The
following result is obtained:

\begin{theorem}
$$\lim_{n\rightarrow \infty}\frac{e_2(n)}{n\log_2 n}=1.$$
\end{theorem}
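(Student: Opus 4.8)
**

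The plan is to establish matching upper and lower bounds of the form $(1+o(1))\,n\log_2 n$ on $e_2(n)$.

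The plan is to establish the two matching inequalities $\limsup_{n\to\infty} e_2(n)/(n\log_2 n)\le 1$ and $\liminf_{n\to\infty} e_2(n)/(n\log_2 n)\ge 1$. The first (an upper bound on $e_2(n)$) is the easy direction and I would get it from an explicit construction; the second (a lower bound on $e_2(n)$) is the crux.

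For the upper bound, fix $n$ and let $s=s(n)$ be the least positive integer with $2^s\ge n-s$; then $2^{s-1}\le n-s\le 2^s$, so $s=\log_2 n+O(1)$, and for $n$ large we have $2\le s\le n-s$ and $1<(n-s)^{1/s}\le 2$, hence $\lceil (n-s)^{1/s}\rceil=2$. By Theorem~1$(ii)$, $rc(K_{s,n-s})=\min\{2,4\}=2$, so $K_{s,n-s}\in\mathcal{G}_2(n)$ and $e_2(n)\le e(K_{s,n-s})=s(n-s)=n\log_2 n+O(n)$; dividing by $n\log_2 n$ and letting $n\to\infty$ gives $\limsup e_2(n)/(n\log_2 n)\le 1$. (The optimal $2$-colouring of $K_{s,2^s}$ has a clean description: label the large side by the distinct strings of $\{0,1\}^s$ and colour $xy$ by the $x$-th coordinate of the label of $y$; two large-side vertices are then joined by a rainbow path since their labels differ somewhere, and two small-side vertices since all coordinate patterns occur.)

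For the lower bound, let $G\in\mathcal{G}_2(n)$ carry a $2$-rainbow edge-colouring $c$ with colours $\{1,2\}$; write $N_i(v)$ for the colour-$i$ neighbourhood of $v$ and $\deg_i(v)=|N_i(v)|$. The structural fact I would build on is that, for every vertex $v$, every vertex outside $N[v]$ lies in $\bigcup_{w\in N_1(v)}N_2(w)\cup\bigcup_{w\in N_2(v)}N_1(w)$, since a rainbow $v$–$u$ path of length $2$ uses a colour-$1$ edge at one end and a colour-$2$ edge at the other; hence $n-1-\deg(v)\le\sum_{w\in N_1(v)}\deg_2(w)+\sum_{w\in N_2(v)}\deg_1(w)$ for all $v$. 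From here the aim is to force $\sum_v\deg(v)\ge(2-o(1))\,n\log_2 n$, i.e. $e(G)\ge(1-o(1))\,n\log_2 n$. The logarithm is driven by the binary nature of the colouring: a degree-$d$ vertex carries only $d$ bits of colour data, while the colour pattern of the edges from a vertex into a fixed "test" set must be rich enough to separate it — via a bicoloured common neighbour — from the vertices it is not adjacent to. I would organise this by splitting $V$ into a "light" part, whose vertices I aim to show must have degree $\ge(1-\epsilon)\log_2 n$, and a "heavy" part that must absorb the remaining degree (note that in $K_{s,2^s}$ the heavy side contributes as much to $\sum_v\deg(v)$ as the light side), using a discharging/amortisation argument to control the interaction between the two parts and the vertices far from every heavy vertex.

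The main obstacle is getting the sharp leading constant $1$ rather than merely the right order $\Theta(n\log n)$. A crude double count — every non-adjacent pair $\{u,v\}$ needs a bicoloured common neighbour $w$, and a fixed $w$ can serve only for pairs in $N_1(w)\times N_2(w)$, so $\sum_w\deg(w)^2\gtrsim n^2$ — only says that $\sum_w\deg(w)^2$ is large, which is consistent with $e(G)=O(n)$ when a few hubs do most of the witnessing; any purely quadratic/covering count therefore loses the logarithm. Extracting $\log_2 n$ seems to require exploiting that once the cross-pairs handled by a balanced hub are removed, the pairs remaining inside each colour class of that hub recreate the same problem on roughly half the vertices, so that $\approx\log_2 n$ such "rounds" are unavoidable; converting this recursive picture into a clean inequality with leading constant exactly $1$, while simultaneously accounting for the vertices not dominated by any hub, is the delicate step.
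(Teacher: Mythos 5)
Your upper bound is correct and is essentially the paper's Lemma~1: take a complete bipartite graph $K_{s,n-s}$ with $s=\log_2 n+O(1)$ so that $\lceil (n-s)^{1/s}\rceil=2$, apply Theorem~1$(ii)$, and get $e_2(n)\le s(n-s)=n\log_2 n+O(n)$. No issue there.

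The lower bound, however, is where the theorem lives, and your proposal does not prove it: you state the target ($e(G)\ge(1-o(1))n\log_2 n$), correctly diagnose why a quadratic covering count fails to produce the logarithm, and then describe a recursive ``halving rounds'' discharging scheme whose decisive step you yourself label as not carried out. That missing step is exactly the content of the paper's Lemma~2, and it is resolved by a different, quite concrete counting idea which your sketch does not contain. Let $S$ be the set of vertices of degree less than $k=\lceil(\log_2\sqrt n)^2\rceil$ and $T$ the rest; if $|T|\ge 2n/\log_2\sqrt n$ the degree sum over $T$ already gives $\tfrac n2\log_2 n$, so assume $s=|S|\ge n-2n/\log_2\sqrt n$. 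Encode each $u\in S$ by the vector in $\{1,-1,0\}^{|T|}$ recording the colour of each edge from $u$ to $T$ (with $0$ for a non-edge). The key claim is that at most $k^2+1$ vertices of $S$ can be compatible with any fixed full pattern in $\{1,-1\}^{|T|}$: two compatible vertices see identical colours on their common edges into $T$, so no rainbow path of length $2$ between them can pass through $T$, forcing them to be at distance at most $2$ inside $G[S]$, and $\Delta(G[S])<k$ bounds the number of such vertices by $k^2+1$. Counting the $2^{|T|-a_i}$ completions of each vector (where $a_i=|N(u_i)\cap T|$) against the total budget $(k^2+1)2^{|T|}$ gives $\sum_i 2^{-a_i}\le k^2+1$, and the AM--GM inequality then yields $e(S,T)=\sum_i a_i\ge s\log_2 s-s\log_2(k^2+1)=n\log_2 n-o(n\log_2 n)$. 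In short: the leading constant $1$ comes not from a recursion over hubs but from the fact that each low-degree vertex has only $2^{a_i}$ possible colour patterns towards $T$ and these patterns must be almost injective on $S$, so almost every vertex of $S$ needs $a_i\gtrsim\log_2 n$ neighbours in $T$. Your plan, as written, would still have to invent this (or an equivalent) argument, and also would have to handle your ``vertices far from every hub'' worry, which the pattern argument disposes of automatically via the diameter-$2$ condition; so as it stands the proposal has a genuine gap in the harder direction.
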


Krivelevich and Yuster in \cite{kri} introduced the concept of
rainbow vertex connection number which is equivalent to the case
that we set a password on each node. Let $d$ and $n$ be natural
numbers, $d<n$. Denote by $\mathcal{G'}_d(n)$ the set of all graphs
of order $n$ with rainbow vertex connection number $d$. Define
$$e_d'(n)=\min\{e(G)\,|\, G\in \mathcal{G}'_d(n)\},$$
where $e(G)$ denotes the number of edges in $G$. The following
result determines $e_d'(n)$.

\begin{theorem} Let $d$ be an integer larger than $1$. Then
$e_d'(n)=n-1$, and the equality holds if and only if $G$ is such a
graph that deleting all leaves of $G$ results in a tree of order
$d$.
\end{theorem}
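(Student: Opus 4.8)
The plan is to derive everything from an exact formula for the rainbow vertex connection number of a tree: for a tree $T$ with $|V(T)|\ge 2$, $rvc(T)$ equals the number of non-leaf (``internal'') vertices of $T$; equivalently, $rvc(T)=|V(T_0)|$, where $T_0$ is the graph obtained from $T$ by deleting all its leaves. First I would dispose of the easy half of the size bound: every $G\in\mathcal{G}'_d(n)$ is connected (any two vertices are joined by a path), so $e(G)\ge n-1$, giving $e_d'(n)\ge n-1$. I would also note that $T_0$ is itself a tree: for any two non-leaves $u,v$ of $T$, every interior vertex of the unique $u$--$v$ path has degree $\ge 2$ and hence is a non-leaf, so that path survives the deletion and $T_0$ is connected.

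Next I would establish the tree formula. The bound $rvc(T)\le|V(T_0)|$ comes from coloring the non-leaves of $T$ with $|V(T_0)|$ distinct colors and the leaves arbitrarily: the interior vertices of the unique path between any two vertices are all non-leaves, hence distinctly colored. For $rvc(T)\ge|V(T_0)|$ I would show that any two non-leaves $u,v$ can be realized simultaneously as interior vertices of one path of $T$. Let $P$ be the $u$--$v$ path; since $\deg_T(u)\ge 2$, $u$ has a neighbor $u'$ other than its neighbor on $P$, and $u'\notin V(P)$ because $T$ is acyclic; pick $v'$ for $v$ likewise, and note $u'\ne v'$ (a common neighbor of $u$ and $v$ would, together with $P$, close a cycle). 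Then $u'Pv'$ is a path with both $u$ and $v$ in its interior, so in any rainbow vertex coloring $u$ and $v$ must receive different colors; hence all non-leaves are distinctly colored and $rvc(T)\ge|V(T_0)|$. This reduction --- that an arbitrary pair of internal vertices of a tree lies in the interior of a single path --- is the one genuinely nontrivial point, and I expect it to be the main obstacle; the rest is bookkeeping.

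Granting the formula, the theorem follows. For $e_d'(n)\le n-1$ (so $e_d'(n)=n-1$), which needs $n\ge d+2$, I would exhibit a caterpillar: a path $v_1v_2\cdots v_d$ with one pendant leaf attached at $v_1$ and the remaining $n-d-1\ge1$ pendant leaves attached at $v_d$. Its non-leaves are exactly $v_1,\dots,v_d$, so $rvc=d$ by the formula (indeed here even the diameter bound $rvc\ge diam-1=d$ already suffices), and it is a tree, so $e=n-1$. For the characterization: if $G\in\mathcal{G}'_d(n)$ has $e(G)=n-1$, then $G$, being connected with $n-1$ edges, is a tree, and $d=rvc(G)=|V(G_0)|$, so deleting the leaves of $G$ leaves a tree of order $d$. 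Conversely, if deleting all leaves of $G$ yields a tree of order $d$, then $G$ has no edge between two leaves (such an edge would be an isolated $K_2$, impossible since $G$ is connected of order $n\ge3$), so $G$ is precisely $G_0$ with pendant leaves attached --- a tree with $n-1$ edges having exactly $d$ non-leaves --- whence $rvc(G)=d$ and $G\in\mathcal{G}'_d(n)$. (Since every tree on at least two vertices has at least two leaves, $\mathcal{G}'_d(n)$ is empty unless $n\ge d+2$, which is the implicit range of the statement.)
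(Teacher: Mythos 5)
Your proposal is correct and follows essentially the same route as the paper: the upper bound comes from coloring the non-leaf vertices of a suitable tree with distinct colors (leaves reusing colors), and the characterization comes from the fact that any two internal vertices of a tree lie in the interior of a single (hence unique, hence necessarily rainbow) path, forcing all internal vertices to get distinct colors, so that $rvc$ of a tree equals its number of non-leaves. The only differences are cosmetic: you package this as an explicit formula, extend the path by arbitrary neighbors where the paper extends it to two leaves, and you are more careful than the paper about exhibiting an extremal tree and noting the implicit range $n\geq d+2$.
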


In the next section, we will prove Theorems $2$ and $3$.

\section{The proofs of our main results}

Since result on the minimal graphs with respect to the rainbow
vertex connection number is easier to prove than that of the edge
case, we first show Theorem~$3$.

\noindent {\em Proof of Theorem~$3:$} Let $G$ be such a tree that
deleting all leaves of $G$ results in a tree $G'$ of order $d$. We
now give $G$ a $d$-rainbow vertex coloring as follows: color the
vertices of $G'$ by $d$ distinct color and color all leaves of $G$
by any used color. It is easy to check that this is a $d$-rainbow
vertex coloring. Thus $e_d'(n)\leq n-1$. On the other hand, any
connected graph has at least $n-1$ edges. Therefore $e'_d(n)=n-1$.

Now, we consider the second part of this theorem. The necessity
holds by the above argument. Conversely, let $G\in
\mathcal{G'}_d(n)$ with $e(G)=n-1$ and $c$ be a $d$-rainbow vertex
coloring. Suppose $G$ has $k$ leaves. Then we can give $G$ an
$(n-k)$-rainbow vertex coloring by the above argument. Thus
$rvc(G)=d\leq n-k$. On the other hand, we say $rvc(G)\geq n-k$. Let
$x$ and $y$ be any pair of vertices that are not leaves. Since $G$
is a tree, there exist two leaves, say $x',y'$, such that the unique
path between $x'$ and $y'$ in $G$ goes through $x$ and $y$. Thus
$c(x)\neq c(y)$. So $rvc(G)=d=n-k$, that is, $n-k=d$. Thus, by
deleting all leaves from $G$, we get a tree $G'$ with order $d$.
$\hfill\Box$

Now, we estimate the upper bound of $e_2(n)$ by constructing a
family of graphs.

\begin{lemma} For $n\geq 2$
$$e_2(n)\leq n\lceil\log_2 n\rceil-{(\lfloor\log_2 n\rfloor-1)}^2.$$
\end{lemma}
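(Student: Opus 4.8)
The plan is to construct, for each $n \ge 2$, an explicit graph $G_n$ on $n$ vertices with $rc(G_n) = 2$ and with at most $n\lceil\log_2 n\rceil - (\lfloor\log_2 n\rfloor - 1)^2$ edges. The natural idea is to exploit the fact that a graph has rainbow connection number $2$ precisely when one can $2$-color its edges so that every pair of vertices is joined either by an edge or by a path of length $2$ whose two edges receive different colors. A clean way to force this is to fix one special vertex $v$ adjacent to everyone (so $v$ handles, via two-step paths through itself, every pair of non-adjacent vertices, provided the coloring around $v$ is set up correctly), and then to economize on the remaining edges. So first I would take a dominating vertex $v$ of degree $n-1$; the remaining $n-1$ vertices must be arranged so that the star edges at $v$ together with the other edges can be $2$-colored making all pairs rainbow-connected.

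The key step is the choice of the graph on $V \setminus \{v\}$ and the coloring. I expect the construction to be a "binary-labeling" gadget: assign to the $n-1$ non-central vertices distinct binary strings of length $k = \lceil \log_2(n-1)\rceil$ (or $\lceil\log_2 n\rceil$), and add, for each coordinate $i \in \{1,\dots,k\}$, edges or a small subgraph that records bit $i$. Concretely one builds $k$ auxiliary "coordinate" vertices (or uses $v$ itself cleverly) so that each non-central vertex is adjacent to those coordinate vertices corresponding to its $1$-bits; coloring can then be arranged so that any two non-central vertices $x,y$, which differ in some coordinate $i$, get a rainbow $x$–(coordinate-$i$ vertex)–$y$ path, while pairs that don't differ appropriately are caught through $v$. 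I would color edges incident to $v$ with color $1$ and the coordinate-gadget edges with color $2$, then check the handful of cases (central–noncentral, noncentral–noncentral agreeing vs. disagreeing on a bit) to verify that a rainbow path of length $\le 2$ always exists, hence $rc(G_n) \le 2$; since $G_n$ is not complete for $n$ large, $rc(G_n) = 2$.

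Finally I would count the edges. The star at $v$ contributes $n-1$ edges; the coordinate gadget contributes roughly $\sum_{x} (\text{number of }1\text{-bits in label of }x)$ edges, which is about $\frac{1}{2}(n-1)k$ on average but must be bounded above in the worst case by something like $(n-1)k$ minus a correction — and it is exactly this correction term that should produce the $(\lfloor\log_2 n\rfloor - 1)^2$ savings in the stated bound. The slickest accounting is probably to note that one can reuse the $k$ coordinate vertices themselves among the $n$ vertices (they need not be extra), and that vertices with labels of low weight, or the coordinate vertices' mutual adjacencies, can be trimmed; carefully optimizing which edges are genuinely needed is where the second-order term $-(\lfloor\log_2 n\rfloor-1)^2$ comes from.

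The main obstacle I anticipate is not the rainbow-connectivity verification (that should be a routine case check once the gadget is right) but rather squeezing the edge count down to exactly match $n\lceil\log_2 n\rceil - (\lfloor\log_2 n\rfloor-1)^2$ rather than a looser bound like $n\lceil\log_2 n\rceil$ or $n(\lceil\log_2 n\rceil + 1)$. This will require a careful, possibly piecewise (in the range of $n$ relative to powers of $2$) choice of which binary labels to assign and which gadget edges to delete, together with an exact rather than asymptotic count — that bookkeeping is the delicate part.
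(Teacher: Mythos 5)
Your proposal is a plan rather than a proof, and the two steps you defer are exactly the ones that matter. First, the construction as described does not rainbow-connect: if every star edge at the dominating vertex $v$ gets color $1$ and every coordinate-gadget edge gets color $2$, then a path $x$--$v$--$y$ is monochromatic and a path $x$--$c_i$--$y$ through a coordinate vertex is also monochromatic, so two nonadjacent non-central vertices have no rainbow path of length at most $2$ at all. Worse, the adjacency rule ``$x$ is joined to $c_i$ iff bit $i$ of $x$ is $1$'' means that when $x$ and $y$ \emph{differ} in coordinate $i$, only one of them is adjacent to $c_i$, so the claimed $x$--$c_i$--$y$ path does not even exist in the graph. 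The fix is to encode the binary label of $x$ in the \emph{colors} of the $k$ edges from $x$ to the coordinate vertices (so $x$ is adjacent to all of them, and two vertices with different labels meet at a coordinate where their edge colors differ) --- but that is precisely the standard $2$-coloring of a complete bipartite graph, which makes your dominating vertex and the $1$-bit adjacency rule superfluous. Second, you never carry out the edge count: you explicitly leave the derivation of the $-(\lfloor\log_2 n\rfloor-1)^2$ term as ``delicate bookkeeping,'' so the stated inequality is not established.

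The paper's proof sidesteps all of this. Choose the integer $k$ with $k+2^{k-1}\le n\le k+2^k$ and take $G=K_{k,n-k}$. Since $n-k\le 2^k$, we have $\lceil\sqrt[k]{n-k}\rceil\le 2$, so $rc(K_{k,n-k})=2$ follows directly from the formula $rc(K_{s,t})=\min\{\lceil\sqrt[s]{t}\rceil,4\}$ quoted as Theorem~$1$; and the edge count is exactly $k(n-k)$, which a short calculation (using $k\le\lceil\log_2 n\rceil$ and $n-k\le 2^k$) bounds by $n\lceil\log_2 n\rceil-(\lfloor\log_2 n\rfloor-1)^2$. If you want to salvage your route, you should drop the dominating vertex, verify the bipartite coloring argument (or cite the known value of $rc(K_{s,t})$), and then do the arithmetic comparison of $k(n-k)$ with the stated bound explicitly; as written, neither the rainbow-connectivity claim nor the inequality is proved.
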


\begin{proof}
For each integer $n\geq 2$, there exists an integer $k$ such that
$k+2^{k-1}\leq n\leq k+2^k$. Consider the number of edges in the
complete bipartite graph $K_{k,n-k}$. We have

$$e(K_{k,n-k})=k(n-k)\leq n\lceil\log_2 n\rceil-{(\lfloor\log_2
n\rfloor-1)}^2.$$

Moreover, since $\lceil\sqrt[k]{n-k}\rceil\leq
\lceil\sqrt[k]{2^k}\rceil=2$, $rc(K_{k,n-k})=2$ follows from
Theorem~$1$. Thus $e_2(n)\leq n\lceil\log_2 n\rceil-{(\lfloor\log_2
n\rfloor-1)}^2$.
\end{proof}

Consider a graph $G\in \mathcal{G}_2(n)$ with order $n$ and maximum
degree $\Delta$. Pick a vertex $u\in V(G)$. Since $d(u)\leq \Delta$,
there exist at most $\Delta$ vertices adjacent to $u$, and at most
$\Delta(\Delta-1)$ vertices at distance $2$ from $u$. Since
$diam(G)=2$, we derive $n\leq 1+\Delta+\Delta(\Delta-1)$. Thus
$\Delta \geq \sqrt{n-1}$. Since $\Delta$ is an integer, we get

\begin{equation}
 \Delta \geq \lceil\sqrt{n-1}\rceil.
\end{equation}

Next, we consider to get a lower bound for $e_2(n)$.

\begin{lemma}
$(i)$ $$e_2(n)\geq \min\{\frac{n}{2}\log_2 n,\,n\log_2 n-4n\}.$$

$(ii)$ If $n\geq 2^{17}$, then
$$e_2(n)\geq n\log_2 n-2n.$$
\end{lemma}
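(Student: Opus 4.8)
The plan is to fix an optimal coloring once and for all and to convert $rc(G)=2$ into a purely combinatorial statement about ``bicolored cherries''. Let $G\in\mathcal G_2(n)$ and fix a $2$-edge-coloring $c$, say with colors red and blue, for which every pair of vertices is joined by a rainbow path. Since $rc(G)=2$ forces $diam(G)=2$, every non-edge $\{u,v\}$ is the pair of endpoints of a path $u\text{-}w\text{-}v$ with $c(uw)\ne c(wv)$, which I will call a \emph{bicolored cherry} with \emph{centre} $w$. Writing $d_r(w),d_b(w)$ for the red and blue degrees of $w$, each non-edge has at least one centre, so
\[
\binom n2-e(G)\ \le\ \sum_{w\in V(G)}d_r(w)d_b(w)\ \le\ \tfrac14\sum_{w\in V(G)}d(w)^2 .
\]
This alone is far too weak (a near-universal centre can absorb quadratically many non-edges), so the real work is local, and will consist of showing that a low-degree vertex forces its neighborhood to be an efficient ``separating hub'', which costs a logarithmic factor.

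Next I would localize at a vertex of smallest degree. Let $\delta=\delta(G)$ and pick $v$ with $d(v)=\delta$; write $N(v)=\{w_1,\dots,w_\delta\}$, $c_i=c(vw_i)$. For $u\notin N[v]$ define the \emph{escape vector} $\beta(u)\in\{0,1\}^{\delta}$ by $\beta(u)_i=1$ iff $uw_i\in E(G)$ and $c(uw_i)\ne c_i$. Connectivity of $u$ to $v$ forces $\beta(u)\ne\mathbf 0$, so the $n-1-\delta$ vertices of $V(G)\setminus N[v]$ split into at most $2^{\delta}-1$ classes according to $\beta$. The key point is that if $u,u'$ lie in one class and $uu'\notin E(G)$, then no $w_i$ can be the centre of a bicolored cherry for $\{u,u'\}$ (check this coordinate by coordinate from $\beta(u)=\beta(u')$), so that within-class non-edge must have a centre outside $N[v]$. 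Bounding the number of within-class non-edges from below by convexity and the available centres from above as in the first inequality, one obtains an estimate of the shape
\[
\frac{(n-1-\delta)\bigl(n-1-\delta-2^{\delta}\bigr)}{2^{\delta+1}}-e(G)\ \le\ \sum_{x\notin N[v]}d_r(x)d_b(x).
\]
If $\delta\ge\log_2 n$ we are already finished, since then $e(G)\ge\tfrac12 n\delta\ge\tfrac n2\log_2 n$; so the interesting regime is $2^{\delta}\ll n$, where the left side has order $n^2/2^{\delta}$.

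To close the argument one must control the right side, and here a dichotomy on $\Delta=\Delta(G)$ enters. On the one hand $\Delta\ge\lceil\sqrt{n-1}\rceil$ by $(1)$, which makes the signature estimate strong when $\Delta$ is only polynomially large. On the other hand, if $\Delta$ is very large there is an almost-universal vertex $z$, and then the vertices joined to $z$ by a red edge and those joined by a blue edge each induce a subgraph whose non-edges cannot be cherried through $z$; since a low-degree centre can serve only $O(d_r d_b)$ pairs and there are only $n$ vertices, each of these two parts must be almost complete, forcing $e(G)\gtrsim n^2/4$ directly. Splitting $V(G)$ into high- and low-degree vertices at a threshold of order $\log_2 n$, estimating $\sum_x d_r(x)d_b(x)$ separately on the two parts (via $d_rd_b\le d^2/4$ and $\sum d=2e(G)$ on the low part, and the near-completeness phenomenon on the high part), and substituting back, one arrives at $e(G)\ge\min\{\tfrac n2\log_2 n,\ n\log_2 n-4n\}$, which is $(i)$. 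Part $(ii)$ is the same scheme with the thresholds and the convexity bounds tuned optimally, and the hypothesis $n\ge 2^{17}$ is exactly what lets one absorb the lower-order terms and replace the crude constant by $-2n$.

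The step I expect to be the main obstacle is precisely this final bookkeeping. The inequality ``every non-edge has at least one centre'' is very lossy when the color-degrees are unbalanced, so one must genuinely control the overlaps of the sets $\bigcup_{w\in N_r(u)}N_b(w)$ and $\bigcup_{w\in N_b(u)}N_r(w)$ — equivalently, argue that any saving obtained by reusing a single high-degree centre is paid for by extra edges elsewhere — and then simultaneously optimize the three interacting thresholds (on $\delta$, on $\Delta$, and on the high/low degree cut-off) so that every error term is $O(n)$. Making all of these combine into the clean bound $n\log_2 n-2n$ for $n\ge 2^{17}$, rather than merely $(1-o(1))\,n\log_2 n$, is the delicate part, and is why the weaker estimate $(i)$ is proved first as a stepping stone.
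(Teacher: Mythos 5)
Your reduction to cherry counting has a genuine gap, and it sits exactly where you suspect, but it is worse than bookkeeping: the right-hand sides of your inequalities cannot be controlled in the way you describe. Your localized inequality at a minimum-degree vertex $v$ is correct, but in the only interesting regime ($\delta$ well below $\log_2 n$) it is vacuous, because $\sum_{x\notin N[v]}d_r(x)d_b(x)$ can be of order $n^2\log n$ in graphs with only $\Theta(n\log n)$ edges: a handful of vertices of linear degree with balanced red/blue degrees already supply that much cherry capacity. The branch you propose to handle this, ``if $\Delta$ is very large then the red and blue neighborhoods of an almost-universal vertex must be almost complete, so $e(G)\gtrsim n^2/4$'', is false. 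The extremal graphs themselves refute it: $K_{k,n-k}$ with $k=\lceil\log_2 n\rceil$ has $rc=2$ (Theorem~1), has $\Delta=n-k$, i.e.\ nearly universal vertices, yet only $k(n-k)\approx n\log_2 n$ edges, and the neighborhood of any small-side vertex is an independent set, so its red and blue parts are as far from complete as possible; the pairs inside those parts are cherried through the other $k-1$ high-degree vertices, each of which serves $\Theta(n^2)$ pairs. For the same reason the global bound $\binom n2-e(G)\le\sum_w d_r(w)d_b(w)$ can never produce an $n\log_2 n$ lower bound: in the extremal examples the capacity exceeds the demand by a $\log n$ factor, so no contradiction is available, and no amount of threshold tuning fixes this. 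Part $(ii)$ inherits the same defect.

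What is missing is a mechanism that charges roughly $\log_2 n$ edges to \emph{each} low-degree vertex, and for that the signature idea must be applied globally, not at one vertex. This is the paper's route: let $S$ be the vertices of degree less than $k=\lceil(\log_2\sqrt n)^2\rceil$ and $T$ the rest (if $|T|\ge 2n/\log_2\sqrt n$ one gets $e(G)\ge\frac n2\log_2 n$ at once, which is the first branch of the min and plays the role of your $\delta\ge\log_2 n$ case). Each $u_i\in S$ gets the color vector of its edges into $T$; two vertices of $S$ whose vectors are compatible (agree on all common $T$-neighbors) have no rainbow $2$-path through $T$, hence must be within distance $2$ in $G[S]$, and since $\Delta(G[S])<k$ at most $k^2+1$ vertices of $S$ can share a compatible full $\pm1$ pattern. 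Counting $\pm1$ completions then gives $\sum_{i}2^{-a_i}\le k^2+1$ with $a_i=|N(u_i)\cap T|$, and the AM--GM inequality turns this into $e(S,T)\ge s\log_2 s-s\log_2(k^2+1)$, i.e.\ an average of almost $\log_2 n$ cross edges per low-degree vertex. Your escape vectors $\beta(u)$ are this idea anchored at the single set $N(v)$, which can only force $2^{\delta}$ to compete with $n$ and hence yields nothing once $\delta$ is small; the fallback to cherry capacity then loses the logarithmic factor irretrievably. If you rework the argument with signatures taken against the whole high-degree set and with the multiplicity bound coming from the bounded degree of $G[S]$, your outline converges to the paper's proof.
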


\begin{proof}
Let $G$ be a graph with diameter $2$ and $c$ be a $2$-rainbow
edge-coloring of $G$ with colors blue and red. Set $k=\lceil{(\log_2
\sqrt{n})}^2\rceil$ and denote by $S$ the set of vertices with
degrees less than $k$. Assume $S=\{u_1,u_2,\ldots,u_s\},
T=V(G)\backslash S=\{u_{s+1},u_{s+2},\ldots,u_{s+t}\}$, where
$s+t=n$. By $(1)$ and $k=\lceil{(\log_2 \sqrt{n})}^2\rceil\leq
\lceil\sqrt{n-1}\rceil\leq \Delta$, we know that $T$ is nonempty. If
$t=|T|\geq \frac{2n}{\log_2\sqrt{n}}$, then

$$e(G)\geq \frac{1}{2}\sum_{v\in T}d_{G}(v)\geq\frac{1}{2}\frac{2n}{\log_2\sqrt{n}}
\lceil{(\log_2\sqrt{n})}^2\rceil\geq \frac{n}{2}\log_2 n,$$

\noindent we are done.

Suppose $t<\frac{2n}{\log_2\sqrt{n}}$, that is,
$s>n-\frac{2n}{\log_2\sqrt{n}}$. Clearly, it is sufficient to show
that $e(S,T)\geq n\log_2 n-4n.$

For every $u_i,\, 1\leq i\leq s$, we define a vector as follows:

$$\alpha(u_i)=(b_{i,1},b_{i,2},\ldots,b_{i,t}),$$ where
  \begin{equation*} b_{i,j}=
  \begin{cases} 1 & if\ c(u_iu_{s+j})\ is\ red;\\
  -1 & if\ c(u_iu_{s+j})\ is\ blue;\\
  0  & if\ u_i\ and\ u_{s+j}\ is\ nonadjacent.
  \end{cases}
  \end{equation*}

Suppose $|N(u_i)\cap T|=a_i,\, 1\leq i\leq s$. Then
$e(S,T)=\sum_{i=1}^{s}a_i$, where $e(S,T)$ denotes the number of
edges between $S$ and $T$. We now estimate the value of $e(S,T)$.
For each $\alpha(u_i)$, we define a set $B_i$ as follows:
$B_i=\{$vectors obtained from $\alpha(u_i)$ by replace ``$0$'' of
$\alpha(u_i)$ by ``$1$'' or ``$-1$''$\}$. Because $|N(u_i)\cap
T|=a_i$, we have $|B_i|=2^{t-a_i}$ for each $i$, where $1\leq i\leq
s$. Set $B=\bigcup_{i=1}^s B_i$. Then $B$ is a multiset of
$t$-dimensional vectors with elements $1$ and $-1$. For each
$\alpha\in B$, $n_{\alpha}$ denotes the number of $\alpha$ in $B$.
We have the following claim.

\begin{claim}
For each $\alpha\in B$, $n_{\alpha}\leq k^2+1$.
\end{claim}

\noindent{\em Proof of Claim~$1:$} If Claim~$1$ is not true, that
is, there exists a vector $\alpha$, without loss of generality,
assume $\alpha=(b_1,b_2,\ldots,b_t)$, such that $n_{\alpha}\geq
k^2+2$. Clearly, it is not possible that there exists some $B_i$
such that $B_i$ contains two $\alpha$. Thus, there exist $k^2+2$
integers, without loss of generality, say $1,2,\ldots,k^2+2$, such
that $\alpha\in B_r,\, 1\leq r\leq k^2+2$. we next show that for
each $i,\, 2\leq i\leq k^2+2$, the distance between $u_1$ and $u_i$
in $G[S]$ is at most $2$. In fact, $c(u_1u_{s+j})=b_j=c(u_iu_{s+j})$
follows from the definition of $B_1$ and $B_i$. Thus there exists no
rainbow path between $u_1$ and $u_i$ through a vertex contained in
$T$. So there must exist a rainbow path between $u_1$ and $u_i$ with
length at most $2$ in $G[S]$. On the other hand, since
$\Delta(G[S])\leq k$, the number of vertices at distance $2$ from
$u_1$ is at most $k^2+1$, which is a contradiction. So, this claim
is true.

By Claim~$1$, we know
$$\sum_{i=1}^s|B_i|\leq (k^2+1)2^t,$$
Since $|B_i|=2^{t-a_i}$ for each $i$, $1\leq i\leq s$,
$$\sum_{i=1}^s 2^{-a_i}\leq (k^2+1).$$
By the inequality between the geometrical and arithmetical means, we
have

$${\large\sqrt[s]{2^{-e(S,T)}}}={\large\sqrt[s]{2^{-\sum_{i}^s a_i}}}\leq
\frac{1}{s}\sum_{i=1}^s 2^{-a_i}\leq \frac{k^2+1}{s},$$

\noindent using the log function on both sides,

$$e(S,T)\geq s\log_2 s-s\log_2 (k^2+1).$$

Since $e(S,T)$ is monotonically decreasing in $s$ and
$s>n-\frac{2n}{\log_2\sqrt{n}}$, we have
\begin{eqnarray*}
e(S,T)&\geq& (n-\frac{2n}{\log_2\sqrt{n}})\log_2
(n-\frac{2n}{\log_2\sqrt{n}})\\[6pt]
&&{}-(n-\frac{2n}{\log_2\sqrt{n}})\log_2 ({\lceil{(\log_2
\sqrt{n})}^2\rceil}^2+1)\\[6pt]
&=& n\log_2 n-4n.
\end{eqnarray*}

For $(ii)$, take $k=\lceil{(\log_2 n)}^2\rceil$. Since $n\geq
2^{17}$, we have $\lceil{(\log_2 n)}^2\rceil\leq
\lceil\sqrt{n-1}\rceil$. Thus $T$ is nonempty by Ineq. $(1)$. All
the remaining arguments are similar to $(i)$.

This completes the proof.
\end{proof}

Combining Lemmas~$1$ and $2$, we know that Theorem $2$ holds.

\end{document}